\newtheorem{lemma}{Lemma}[section]
\newtheorem{remark}{Remark}[section]
\newtheorem{theorem}{Theorem}
\newtheorem{example}{Example}
\newtheorem*{ack}{Acknowledgements}
\def\<{\langle}
\def\>{\rangle}
\newcommand{\grad}{\mathop{\mathrm{grad}}\nolimits}
\begin{document}

\title{Strongly Hermitian Einstein-Maxwell Solutions on Ruled Surfaces}
\author{Caner Koca and Christina W. T{\o}nnesen-Friedman}
\thanks{This work was partially supported by a grant from the Simon's Foundation (208799 to Christina T{\o}nnesen-Friedman)}
\address{Caner Koca \\ Department of Mathematics\\New York City College of Technology\\CUNY\\ Brooklyn\\ NY 11201\\ USA } \email{CKoca@citytech.cuny.edu}
\address{Christina W. T{\o}nnesen-Friedman\\ Department of Mathematics\\ Union
College\\ Schenectady\\ New York 12308\\ USA } \email{tonnesec@union.edu}

\begin{abstract}
This paper produces explicit strongly Hermitian Einstein-Maxwell  solutions on the smooth compact $4$-manifolds that are $S^2$-bundles over compact Riemann surfaces of any genus. This generalizes the existence results by C. LeBrun in \cite{LeJGP,Le15}. Moreover, by calculating the (normalized) Einstein-Hilbert functional of our examples we generalize Theorem E of \cite{Le15}, which speaks to the abundance of Hermitian Einstein-Maxwell solutions on such manifolds. As a bonus, we exhibit  certain pairs of strongly Hermitian Einstein-Maxwell solutions, first found in \cite{Le15}, on the first Hirzebruch surface in a form which clearly shows that they are conformal to a common K\"ahler metric. In particular, this yields a non-trivial example of non-uniqueness of positive constant scalar curvature metrics in a given conformal class.
\end{abstract}
\maketitle

\section{Introduction}
It is well known that on a compact Riemannian $4$-manifold $(M,h)$, the scalar curvature of $h$ must be constant if $h$ is part of a solution the \emph{Einstein-Maxwell equations }\cite{Le08};
\begin{equation}\label{em}
\begin{array}{rcl}
dF & = & 0\\
\\
d\star F & = & 0\\
\\
\left[ r + F \circ F\right]_0 &=& 0,
\end{array}
\end{equation}
where $r$ is the Ricci tensor of $h$, $F$ is a real $2$-form on $M$, $[\,]_0$ denotes the trace-free part with respect to $h$, and $F\circ F$ is the  composition of $F$ with itself, when we view $F$ as an endomorphism on the tangent bundle $TM$. While the converse is not true in general (as is e.g. manifested on compact, complex, non-K\"ahlerian surfaces by Proposition 3 in \cite{Le08}), it follows from the work of Lebrun and Apostolov, Calderbank and Gauduchon \cite{ACGat1, Le08, LeJGP, Le15} that if $(M^4,g,J)$ is a K\"ahler manifold and $f>0$ is a real holomorphic potential on $(M,J,g)$ such that $h=f^{-2}g$ has constant scalar curvature, then $(h,F)$ solves \eqref{em}, where $F$ is a unique harmonic $2$-form on $M$ with self-dual part equal to the
K\"ahler form $\omega$. Since in that case both $h$ and $F$ are $J$ invariant, such Einstein-Maxwell solutions are called \emph{strongly Hermitian} \cite{LeJGP}.

In this paper we generalize Theorem D of \cite{Le15};
\begin{theorem}\label{existencetheorem}
Let $(M,J)$ be a minimal ruled surface of the form \newline
${\mathbb P}({\mathcal O} \oplus {\mathcal L}) \rightarrow \Sigma$, where $L\rightarrow \Sigma$ is any  holomorphic line bundle
of non-zero degree, ${\mathcal O}\rightarrow \Sigma$ is the trivial holomorphic line bundle, and
$\Sigma$ is a compact Riemann surface of genus $\mathfrak{g}$. Then there exists an open and non-empty subset $\mathscr K$ of the K\"ahler cone on $(M,J)$ such that each
K\"ahler class in $\mathscr K$ contains a K\"ahler metric $g$ which is conformal to a (non-K\"ahler)  Einstein-Maxwell metric $h$. When $\mathfrak{g} \leq 1$, $\mathscr K$ may be taken to be the entire K\"ahler cone.
\end{theorem}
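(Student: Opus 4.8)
The plan is to realize every K\"ahler class in $\mathscr K$ by an explicit \emph{admissible} (Calabi type) K\"ahler metric, in the sense of the momentum construction of Hwang--Singer and Apostolov--Calderbank--Gauduchon, and then to apply the criterion recalled in the Introduction: it suffices to produce, in the prescribed class, a K\"ahler metric $g$ together with a positive real holomorphy potential $f$ for which $h=f^{-2}g$ has constant scalar curvature; then $(h,F)$ solves \eqref{em} with $F$ the unique harmonic representative whose self-dual part is $\omega$, and $h$ is moreover non-K\"ahler as soon as $f$ is non-constant. Fix on $\Sigma$ a metric $g_\Sigma$ of constant scalar curvature $\kappa$ (round, flat, or hyperbolic according as $\mathfrak{g}=0$, $\mathfrak{g}=1$, or $\mathfrak{g}\ge2$) and present $M=\mathbb{P}(\mathcal{O}\oplus\mathcal{L})$ with its standard vertical circle action. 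An admissible K\"ahler metric on $M$ is governed by a parameter $x\in(-1,1)\setminus\{0\}$ fixing the ray in the (two-dimensional) K\"ahler cone, together with a \emph{momentum profile} $\Theta$: a function on $[-1,1]$, smooth and positive on $(-1,1)$, with $\Theta(\pm1)=0$ and $\Theta'(\pm1)=\mp2$ (standard normalization); writing $p(\mathfrak{z})=1+x\mathfrak{z}$ the metric is
\[
g \;=\; p(\mathfrak{z})\,g_\Sigma \;+\; \frac{d\mathfrak{z}^{\,2}}{\Theta(\mathfrak{z})} \;+\; \Theta(\mathfrak{z})\,\theta^{\,2},
\]
where $\mathfrak{z}\in[-1,1]$ is the moment map of the action and $\theta$ a connection one-form; as $x$ ranges over $(-1,1)\setminus\{0\}$ one meets a representative of every ray of the K\"ahler cone, and for each fixed $x$ the profiles $\Theta$ give distinct K\"ahler metrics in one and the same class.

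For the conformal factor I would take $f$ to be a positive affine function of the moment map, $f=A+B\mathfrak{z}$ with $A>|B|$; these are precisely the positive holomorphy potentials of $g$, and choosing $B\neq0$ keeps $h$ non-K\"ahler. Using the momentum-construction identities $|\nabla_g\mathfrak{z}|^2_g=\Theta$ and $\Delta_g\mathfrak{z}=(p\Theta)'/p$ together with the standard expression $\mathrm{Scal}(g)=\big(\kappa-(p\Theta)''\big)/p$ (all up to the fixed normalization), the conformal-change identity $\mathrm{Scal}(f^{-2}g)=f^2\,\mathrm{Scal}(g)+6f\,\Delta_g f-12\,|\nabla_g f|^2_g$ in dimension four turns the requirement $\mathrm{Scal}(h)\equiv\mathrm{const}$ into a second-order \emph{linear} ODE for $u:=p\,\Theta$. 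Multiplying by $p$, it reads, schematically,
\[
-(A+B\mathfrak{z})^2\,u'' \;+\; 6B\,(A+B\mathfrak{z})\,u' \;-\; 12B^2\,u \;=\; c\,p(\mathfrak{z}) \;-\; \kappa\,(A+B\mathfrak{z})^2 ,
\]
where $c$ is the (as yet unknown) value of $\mathrm{Scal}(h)$. In the variable $w=A+B\mathfrak{z}$ the homogeneous part is an Euler equation whose indicial polynomial is $(m-3)(m-4)$, so its solutions are $c_1w^3+c_2w^4$; the right-hand side is a polynomial of degree $\le2$ in $w$, hence non-resonant, and there is a polynomial particular solution of degree $\le2$. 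Thus $u$ is completely explicit --- a polynomial of degree $\le4$ in $\mathfrak{z}$ whose coefficients depend on $A,B,x,\kappa,c_1,c_2,c$ --- and $\Theta=u/p$ is automatically smooth on $[-1,1]$ because $p>0$ there.

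The endpoint conditions on $\Theta$ translate, using $1\pm x\neq0$, into the four linear equations $u(\pm1)=0$, $u'(1)=-2(1+x)$, $u'(-1)=2(1-x)$. Normalizing $A=1$, the explicit $u$ depends linearly on the three constants $c,c_1,c_2$ besides the parameter $B$; three of the four equations pin down $c,c_1,c_2$ and the fourth determines $B$ in terms of the K\"ahler parameter $x$, so that $g$ and $f$ are fully determined for each $x$, and one verifies $|B|<1$, i.e.\ $f>0$. At this point we possess a strongly Hermitian Einstein-Maxwell solution for every $x$ \emph{provided} the single remaining inequality holds: positivity of the explicit quartic-over-linear function $\Theta=u/p$ throughout $(-1,1)$. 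By definition $\mathscr K$ is the set of rays for which it does, and this positivity analysis is the genuine content of the argument and the main obstacle. Openness of $\mathscr K$ is immediate, since the whole construction depends continuously on $x$; non-emptiness can be obtained by a direct estimate for $\Theta$ over a convenient sub-range of $x$, or by continuation from the Hirzebruch-surface solutions already exhibited in \cite{Le15}.

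Finally, when $\mathfrak{g}\le1$ we have $\kappa\ge0$. I expect that in this case the explicit $\Theta$ can be put in a manifestly nonnegative form --- the term $-\kappa(A+B\mathfrak{z})^2$ in the ODE supplies forcing of a favourable sign --- or, alternatively, that a convexity/maximum-principle argument applied to $u$ rules out interior zeros of $\Theta$ for every admissible $x$. Establishing this sign condition is exactly what upgrades $\mathscr K$ to the entire K\"ahler cone when $\mathfrak{g}\le1$, completing the proof.
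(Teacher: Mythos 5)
Your strategy coincides with the paper's: admissible metrics with profile $\Theta=F(\mathfrak{z})/(1+x\mathfrak{z})$, conformal factor the inverse square of an affine holomorphy potential $\mathfrak{z}+b$ with $|b|>1$, the observation that constancy of $Scal(h)$ forces $F$ to be a quartic because the operator $y\mapsto(\mathfrak{z}+b)^2y''-6(\mathfrak{z}+b)y'+12y$ is an Euler operator with indicial roots $3,4$, and the reduction of the whole problem to the endpoint conditions together with positivity of $F$ on $(-1,1)$. The parameter count (four endpoint conditions against $c_1,c_2$, the constant value of the scalar curvature, and $B$) is also correct. But the write-up stops exactly where the theorem begins; two steps are asserted rather than proved. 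First, you never verify that the ``fourth equation'' determining $B$ (equivalently $b$) admits a root with $|b|>1$: this equation is not linear in $b$ --- in the paper it is the quartic $(xb^2-2b+x)\bigl((s_\Sigma x-2)b^2+2bx-s_\Sigma x\bigr)=0$, and only the first factor supplies a root $b=(1+\sqrt{1-x^2})/x>1$ valid for every $x\in(0,1)$ and every genus (the second factor contributes admissible roots only when $s_\Sigma=2$). Without exhibiting such a root the construction does not get off the ground.

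Second, and more seriously, the positivity of $\Theta$ --- which you yourself identify as ``the genuine content of the argument and the main obstacle'' --- is left as ``I expect that\dots''. This is the actual proof of the theorem. The paper settles it by writing $F(\mathfrak{z})=(1-\mathfrak{z}^2)\,m(\mathfrak{z})$ with $m(\mathfrak{z})=(1+x\mathfrak{z})-c(1-\mathfrak{z}^2)$ an explicit concave-up quadratic ($c>0$), and noting that $m(-1)=1-x>0$ while $m'(-1)=x-2c$ is an explicit function of $x$ and $s_\Sigma$ which is positive for all $x\in(0,1)$ when $s_\Sigma\ge 0$ (i.e.\ $\mathfrak{g}\le 1$), and positive for all sufficiently small $x$ for any $s_\Sigma$; a concave-up parabola that is positive and increasing at the left endpoint is positive on the whole interval. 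The small-$x$ statement is what produces the non-empty open set $\mathscr{K}$ for $\mathfrak{g}\ge 2$, and some restriction is unavoidable: for $s_\Sigma<0$ the paper shows positivity genuinely fails for $x$ near $1$, so no soft convexity or maximum-principle argument can yield the full cone, and your hoped-for ``manifestly nonnegative form'' of $\Theta$ cannot exist uniformly in $x$ for higher genus. Finally, ``continuation from the Hirzebruch-surface solutions'' cannot establish non-emptiness here, since those solutions live over $\Sigma=\mathbb{CP}^1$ and there is no deformation connecting bases of different genera; the direct estimate at small $x$ is really needed.
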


\begin{remark}
For all values of  $\mathfrak{g}$ we formally get a solution, represented by a  polynomial, for each K\"ahler class.
The positivity of this polynomial over a certain interval is necessary and sufficient for the  formal solution to correspond
to an actual solution for the given K\"ahler class. When $\mathfrak{g}\leq 1$, positivity always hold. When $\mathfrak{g}>1$, there will be K\"ahler classes where positivity holds and K\"ahler classes where positivity fails. This parallels a phenomenon observed for extremal K\"ahler metrics (see e.g. \cite{t-f} or \cite{acgt}).
\end{remark}

The (normalized) Einstein-Hilbert functional evaluated for a Riemannian metric on a compact $4$-manifold $M$  is defined by
$${\mathfrak S}:= \frac{\int_M Scal\, d\mu}{\sqrt{\int_M d\mu}}.$$
Note that this is invariant under re-scaling.
For a detailed introduction to this functional we refer to \cite{Le15, LeJGP}. Here we will just give a very brief summary of the two aspects we will consider.

For a fixed conformal class, $[g]$,  of Riemannian metrics, ${\mathfrak S}\mid_{[g]}$  is the Yamabe functional and the Yamabe constant, $Y_{[g]}$,  of $[g]$ is then defined as the infimum of
${\mathfrak S}\mid_{[g]}$. From the famous work of Yamabe, Trudinger, Aubin, and Schoen \cite{A82, LePa87, S84}, we know that this infimum is in fact achieved by a metric (a \emph{Yamabe minimizer}) in $[g]$.
This metric must have constant scalar curvature and if $Y_{[g]} \leq 0$, the minimizer is unique (up to re-scaling) and is the one and only constant scalar curvature metric in $[g]$. For $Y_{[g]} >0$, the Yamabe minimizer is not necessarily unique and further a constant scalar curvature metric in $[g]$ is not necessarily a Yamabe minimizer. This makes the estimation of $Y_{[g]}$ for the case where $[g]$ has no constant negative scalar curvature representative very difficult. We do know \cite{A82} that
$Y_{[g]} \leq 8\sqrt{6}\pi$ and thus if a constant scalar curvature representative of $[g]$ is a Yamabe minimizer, it must satisfy that
$\frac{\int_M Scal\, d\mu}{\sqrt{\int_M d\mu}} \leq 8\sqrt{6}\pi$. Further, by the work of Schoen \cite{S84}, the inequality is known to be sharp if $(M,[g])$ is not conformal to the $4$-sphere.

We can also consider ${\mathfrak S}$ on the following space. Assume the orientation of $M$ is fixed. Let $\Omega$ be a fixed cohomology class in $H^2(M,{\mathbb R})$ such that $\Omega^2 >0$ and let
$\mathscr{G}_\Omega$ denote the set of smooth Riemannian metrics $h$ on $M$ for which the
harmonic representative $\omega$ of  $\Omega$ is self-dual. In particular, $\mathscr{G}_\Omega$ will include any Riemannian metric that is K\"ahler with respect to some complex structure (compatible with the fixed orientation) on $M$ such that its K\"ahler form belongs to $\Omega$. Obviously if $h \in \mathscr{G}_\Omega$, then $[h] \subseteq \mathscr{G}_\Omega$. The critical points of ${\mathfrak S}|_{\mathscr{G}_\Omega}$ are exactly the Einstein-Maxwell solutions  for which the self-dual part $F^+$ of the 2-form $F$ is in $\Omega$ (cf. Proposition 1 in  \cite{Le15}).

In this setting, as defined by LeBrun \cite{Le15}, the moduli-space of the $\Omega$-compatible solutions of the Einstein-Maxwell equations is
\begin{equation}\label{modulispace}
\mathcal M_{\Omega} = \{(h,F)\textnormal{ solves \eqref{em}} \mid F^{+}\in\Omega\}/[\mathrm{Diff}_H (M)\times \mathbb R^+]
\end{equation}
where $\mathrm{Diff}_H(M)$ is the group of diffeomorphisms of $M$ acting trivially on $H^2(M,\mathbb R)$. $\mathbb R^+$ acts by rescaling the metric $h$, but does not change the 2-form $F$. LeBrun shows that the value of $\mathfrak S$ is an invariant of connected components of $\mathcal M_\Omega$; that is, if $(h,F)$ and $(\tilde h,\tilde F)$ are solutions with $\mathfrak S(h) \neq \mathfrak S(\tilde h)$, then they must belong to different connected components of $\mathcal M_\Omega$ (Proposition 2 in \cite{Le15}). By computing the value of the functional $\mathfrak S$ for the Einstein-Maxwell solutions he found on the Hirzebruch surfaces, whose underlying smooth manifolds are $S^2\times S^2$ or $\mathbb{CP}_2\#\overline{\mathbb{CP}}_2$, LeBrun shows that $\mathcal M_\Omega$ has as many connected components as we wish for an appropriate de Rham class $\Omega$ on these manifolds (cf. Theorem E in \cite{Le15}). In this paper, we also give a generalization of this result. Analogously, the diffeotypes of the complex surfaces $\mathbb P(\mathcal O \oplus \mathcal L)$ of our Theorem \ref{existencetheorem} fall into two cases: They are either a product $S^2 \times \Sigma$, or otherwise the twisted product $S^2\tilde{\times} \Sigma$.

\begin{theorem}\label{modulispacetheorem}
Let the smooth 4-manifold $M$ be either the product $S^2 \times \Sigma$ or the twisted product $S^2\tilde{\times} \Sigma$ where $\Sigma$ is a Riemann surface of genus $\mathfrak g$.  Then, for any given natural number $\mathbf{N}$ we can find a de Rham class $\Omega$ on $M$ with $\Omega^2>0$ such that the moduli space $\mathcal M_\Omega$ given by \eqref{modulispace} has at least $\mathbf{N}$ components.
\end{theorem}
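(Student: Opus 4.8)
The plan is to reduce the statement to a counting problem about the numerical invariant $\mathfrak{S}$, and then to resolve that problem using the explicit formulas underlying Theorem \ref{existencetheorem}. First I would invoke the two facts recalled in the Introduction: (i) $\mathfrak{S}$ is constant on each connected component of $\mathcal{M}_\Omega$ (LeBrun's Proposition 2 in \cite{Le15}); and (ii) whenever $g$ is a K\"ahler metric on a complex surface in a K\"ahler class equal to a fixed de Rham class $\Omega$, its conformally rescaled Einstein--Maxwell partner $h=f^{-2}g$ produced by Theorem \ref{existencetheorem} gives an $\Omega$-compatible solution $(h,F)$, since the self-dual part of $F$ is the K\"ahler form $\omega\in\Omega$ and $h\in\mathscr{G}_\Omega$. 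Hence it is enough to exhibit, for one well-chosen class $\Omega$ with $\Omega^2>0$, at least $\mathbf{N}$ strongly Hermitian Einstein--Maxwell solutions whose self-dual Maxwell classes all equal $\Omega$ but whose values of $\mathfrak{S}$ are pairwise distinct.

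The source of the many solutions is that the fixed smooth manifold $M$ — either $S^2\times\Sigma$ or $S^2\tilde\times\Sigma$ — carries infinitely many complex structures of the type covered by Theorem \ref{existencetheorem}, namely $J_m$, the complex structure of $\mathbb{P}(\mathcal{O}\oplus\mathcal{L}_m)\to\Sigma$ for $\mathcal{L}_m$ of degree $m$, with $m$ ranging over the even integers in the product case and over the odd integers in the twisted case. Fixing a basis of $H^2(M,\mathbb{Z})$, I would write down the K\"ahler cone of each $J_m$ explicitly (the standard description for Hirzebruch-type and pseudo-Hirzebruch ruled surfaces), and observe that for a de Rham class of the shape ``fiber class plus $t\times$(section class)'' the set of $m$ for which $\Omega$ lies in the K\"ahler cone of $J_m$ — and, when $\mathfrak{g}>1$, in the subset $\mathscr{K}$ of classes admitting an actual solution, which is precisely where the positivity condition of the Remark must be checked — is an interval of $m$'s whose length grows with $t$. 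Choosing $t$ large in terms of $\mathbf{N}$ then produces complex structures $J_{m_1},\dots,J_{m_{\mathbf{N}}}$, for each of which Theorem \ref{existencetheorem} furnishes a K\"ahler metric in the class $\Omega$ together with an Einstein--Maxwell partner $(h_j,F_j)$ with $F_j^{+}\in\Omega$.

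It then remains to compute $\mathfrak{S}(h_j)$ and verify that these $\mathbf{N}$ numbers are distinct, and this is the main obstacle. The input is the explicit expression for $\mathfrak{S}$ of the conformally K\"ahler solutions built by the admissible/Calabi-type construction of Theorem \ref{existencetheorem}; since $\mathfrak{S}$ is scale-invariant but $\Omega$ is now genuinely fixed rather than fixed up to scale, one must track $\int_M d\mu_h$ carefully, and this depends on the momentum profile and on the holomorphic potential $f_m$, hence genuinely on $m$ even though the topological quantities $c_1(J_m)\cdot\Omega$, $c_1(J_m)^2$, and $\Omega^2$ do not vary with $m$ within a fixed parity. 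I expect to establish distinctness either by showing that $m\mapsto\mathfrak{S}(h_m)$ is strictly monotone on the chosen range of $m$, or by an asymptotic estimate as $m$ and $t$ are sent to infinity together (for instance that $\mathfrak{S}(h_m)\to 8\sqrt{6}\pi$, or grows, at an $m$-dependent rate), in parallel with LeBrun's computation for the Hirzebruch surfaces in Theorem E of \cite{Le15}. Once distinctness is in hand, facts (i) and (ii) force $(h_1,F_1),\dots,(h_{\mathbf{N}},F_{\mathbf{N}})$ to lie in $\mathbf{N}$ distinct components of $\mathcal{M}_\Omega$, which proves the theorem.
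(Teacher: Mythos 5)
Your proposal follows essentially the same route as the paper: fix $\Omega = 4\pi(E+p\,C)$ with $p$ large, realize it as a K\"ahler class for the complex structures $\mathbb{P}(\mathcal{O}\oplus\mathcal{L}_n)\to\Sigma$ for the $\mathbf{N}$ admissible degrees $n$ of the fixed parity, secure the positivity condition in genus $\geq 2$ via Lemma \ref{littlelemma} and the choice \eqref{pk}, and then separate the resulting solutions by the strict monotonicity in $k$ of the explicit values \eqref{EHfunc3}, \eqref{EHfunc4}, \eqref{EHfunc7}, \eqref{EHfunc8} of $\mathfrak{S}$, combined with LeBrun's Proposition 2 --- exactly the argument of Sections \ref{EHg=1} and \ref{EHgengeq2}. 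The one slip is that the class must be taken as (section) plus $t\times$(fiber) with $t$ large, not ``fiber plus $t\times$section'': in the latter normalization the interval of admissible degrees shrinks rather than grows with $t$.
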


The idea behind the proof of Theorem \ref{modulispacetheorem} is to first fix the de Rham class $\Omega$ on $M$, but to allow the complex structure to vary by considering the line bundles $\mathcal L$ of different degrees. This allows us to view the same de Rham class $\Omega$ as the K\"ahler class with respect to different complex structures on $M$. The admissible K\"ahler metrics in $\Omega$ for these different complex structures will then yield Einstein-Maxwell metrics after conformal rescaling by Theorem \ref{existencetheorem}. The calculation of the Einstein-Hilbert functional $\mathfrak S$, which is presented in Sections \ref{EHg=1} and \ref{EHgengeq2}, show that we get as many different values as we wish by changing our initial choice of the de Rham class $\Omega$.

The outline of the paper is as follows: In Section \ref{hirze}, we review the construction of \emph{admissible K\"ahler metrics} on the minimal ruled surfaces $\mathbb{P}(\mathcal O\oplus\mathcal L)$. In Section \ref{admisEMsoln}, we look at a particular conformal rescaling of the admissible K\"ahler metrics by positive \emph{holomorphy potentials}, and determine when this rescaling is a solution of the Einstein-Maxwell equations. The special phenomenon, mentioned in the abstract, on the first Hirzebruch surface is treated in Section \ref{Case2}. In Section \ref{EHsection}, we compute the value of the Einstein-Hilbert functional $\mathfrak S$ for the Einstein-Maxwell solutions we find in Section \ref{admisEMsoln} and discuss the Yamabe constant of the conformal classes of these metrics. Finally, we end the paper with a short section, Section \ref{amf}, discussing the connections with the recent results of Vestislav Apostolov and Gideon Maschler \cite{am}.

\begin{ack}
We would like to warmly thank Claude LeBrun for his advice and  encouragement  as we were writing this paper. We would also like to thank Vestislav Apostolov and Gideon Maschler for their insightful comments. Further, we are grateful to the anonymous referee for making some very helpful comments that resulted in the addition of Section \ref{amf}.
\end{ack}

\section{Admissible metrics on Ruled Surfaces}\label{hirze}

Let $S_n$ be a ruled surface of the form
${\mathbb P}({\mathcal O} \oplus {\mathcal L}_n) \rightarrow \Sigma$,
where $\Sigma$ is a compact Riemann surface, ${\mathcal L}_n$ is a
holomorphic line bundle of degree $n\in {\mathbb Z}^+$ on $\Sigma$, and
${\mathcal O}$ is the trivial holomorphic line bundle. We will call this an \emph{admissible ruled surface} \cite{acgt}.

Since $({\mathcal O} \oplus {\mathcal L}_n) \rightarrow \Sigma$ is not a polystable holomorphic vector bundle we know that $S_n$ admits no cscK metrics
(see e.g. Theorem 2 in \cite{at}).
Let $g_{\Sigma}$ be the
K\"ahler metric
on $\Sigma$ of constant scalar curvature $2s_{\Sigma}$, with K\"ahler form
$\omega_{\Sigma}$, such that
$c_{1}({\mathcal L}_n) = [\frac{\omega_{\Sigma}}{2 \pi}]$.
Let ${\mathcal K}_\Sigma$ denote the canonical bundle of $\Sigma$. Since $c_1({\mathcal K}_\Sigma^{-1}) = [\rho_\Sigma/2\pi]$, where $\rho_\Sigma$ denotes the Ricci form, we must have that $s_{\Sigma}= 2(1-\mathfrak{g})/n$, where $\mathfrak{g}$
denotes the genus of $\Sigma$. In particular, note that $s_\Sigma \leq 2$.

The natural $\mathbb{C}^*$-action on ${\mathcal L}_n$
extends to a holomorphic
$\mathbb{C}^*$-action on $S_n$. The open and dense set $S_n^0$ of stable points with
respect to the
latter action has the structure of a principal $\mathbb{C}^*$-bundle over the stable
quotient.
A choice of a hermitian norm on the fibers of ${\mathcal L}_n$, whose Chern connection has curvature equal to $\omega_\Sigma$, induces via a Legendre transform a function
$\mathfrak{z}:S_n^0\rightarrow (-1,1)$ whose extension to $S_n$ consists of the critical manifolds
$E_{0}:=\mathfrak{z}^{-1}(1)=P({\mathcal O} \oplus 0)$ and
$E_{\infty}:= \mathfrak{z}^{-1}(-1)=P(0 \oplus {\mathcal L}_n)$.
These  are respectively the zero and infinity section of $S_n \rightarrow
\Sigma$. It is well-known that  $E_{0}$ and $E_{\infty}$ have the property that $E_{0}^{2} = n$ and $E_{\infty}^{2} = -n$,
respectively. If $C$ denotes a fiber of the ruling $S_n \rightarrow
\Sigma$, then $C^{2}=0$, while $C \cdot E_{i} =1$ for both, $i=0$ and
$i=\infty$. Any real cohomology class in the two dimensional space
$H^{2}(S_n, {\mathbb R})$ may be written as a linear combination of
(the Poincar\'e
duals of) $E_{0}$ and $C$,
$$
m_{1} E_{0} +m_{2} C\, .
$$
Thus, we may think of $H^{2}(S_n, {\mathbb R})$ as ${\mathbb R}^2$,
with coordinates $(m_{1},m_{2})$. The K\"ahler cone ${\mathcal K}$ may
be identified with \newline
${\mathbb R}_{+}^2=\{ (m_{1}, m_{2})\,  | \; m_{1} > 0,
m_{2} > 0 \}$ (see \cite{fujiki} or Lemma 1 in \cite{t-f}).

The \emph{admissible K\"ahler metrics} were introduced as such in \cite{acgt} . What follows is a quick overview on how to build such metrics
on $S_n$. We will use the notation from \cite{acgt}, but it is fair to note that in the special case of $S_n$  this construction dates further back.
We refer to \cite{acgt} for references as well as more technical details on what follows below.

Let  $\theta$ be a connection one form for the
Hermitian metric on $S_n^0$, with curvature
$d\theta = \omega_\Sigma$. Let $\Theta$ be a smooth real function with
domain containing
$(-1,1)$. Let $x$ be a real number such that $0 < x < 1$.
Then an admissible K\"ahler metric
is given on $S_n^0$ by
\begin{equation}\label{metric}
g  =  \frac{1+x \mathfrak{z}}{x} g_\Sigma
+\frac {d\mathfrak{z}^2}
{\Theta (\mathfrak{z})}+\Theta (\mathfrak{z})\theta^2\,
\end{equation}
with K\"ahler form
\begin{equation*}
\omega =  \frac{1+x \mathfrak{z}}{x}\omega_\Sigma
+d\mathfrak{z}\wedge \theta\,. \label{kf}
\end{equation*}
The complex structure yielding this
K\"ahler structure is given by the pullback of the base complex structure
along with the requirement
\begin{equation}\label{complex}
Jd\mathfrak{z} = \Theta \theta
\end{equation}
 The function $\mathfrak{z}$ is
hamiltonian
with $K= J\grad \mathfrak{z}$ a Killing vector field. Observe that $K$
generates the circle action which induces the holomorphic
$\mathbb{C}^*$- action on $S_n$ as introduced above.
In fact, $\mathfrak{z}$ is the moment
map on $S_n$ for the circle action, decomposing $S_n$ into
the free orbits $S_n^0 = \mathfrak{z}^{-1}((-1,1))$ and the special orbits
$\mathfrak{z}^{-1}(\pm 1)$. Finally, $\theta$ satisfies
$\theta(K)=1$.
In order that $g$ (be a genuine metric and) extend to all of $S_n$,
$\Theta$ must satisfy the positivity and boundary
conditions
\begin{align}
\label{positivity}
(i)\ \Theta(\mathfrak{z}) > 0, \quad -1 < \mathfrak{z} <1,\quad
(ii)\ \Theta(\pm 1) = 0,\quad
(iii)\ \Theta'(\pm 1) = \mp 2.
\end{align}
It is convenient to define a function $F(\mathfrak{z})$ by the formula
\begin{equation}\label{theta}
\Theta(\mathfrak{z})= \frac{F(\mathfrak{z})}{(1+x
\mathfrak{z})}
\end{equation}
Since $(1+x
\mathfrak{z})$ is positive for $-1<\mathfrak{z}<1$, conditions
\eqref{positivity}
imply the following equivalent conditions on $F(\mathfrak{z})$:
\begin{align}
\label{positivityF}
(i)\ F(\mathfrak{z}) > 0, \quad -1 < \mathfrak{z} <1,\quad
(ii)\ F(\pm 1) = 0,\quad
(iii)\ F'(\pm 1) = \mp 2(1 \pm x).
\end{align}

The construction of admissible K\"ahler metrics is based on the symplectic viewpoint. Different choices of $F$ yield different complex structures that are all compatible with the same fixed symplectic form $\omega$. However, for each $F$ there is an $S^1$-equivariant diffeomorphism pulling back $J$ to the original fixed complex structure on $S_n$ in such a way that the K\"ahler form of the new K\"ahler metric is in the same cohomology class as $\omega$ \cite{acgt}. Therefore, with all else fixed, we may view the set of the functions $F$ satisfying \eqref{positivityF} as parametrizing a certain family of K\"ahler metrics within the same K\"ahler class of $S_n$.

One easily checks that the K\"ahler class of an admissible metric \eqref{metric}
\begin{equation}\label{class1}
[\omega] = 4\pi E_{0}+ \frac{2\pi(1-x)n}{x} C
\end{equation}
and hence, up to an overall rescale, every K\"ahler class in the K\"ahler cone may be represented by an admissible K\"ahler metric.

Let $g$ be an admissible metric as above determined by a given $F$. Two observations that may be found in \cite{acg} will be useful to us:
\begin{itemize}
\item The scalar curvature
is given by
\begin{equation}\label{Scal}
 Scal(g) = \frac{2s_{\Sigma} x}{1+x\mathfrak{z}} - \frac{F''(\mathfrak{z})}{1+x\mathfrak{z}}\, .
 \end{equation}
\item If $p(\mathfrak{z})$ is a smooth function of $\mathfrak{z}$,
then
\begin{equation}
\label{Lapl}
\Delta p = -[F(\mathfrak{z}) p'(\mathfrak{z})]'/(1+x\mathfrak{z}),
\end{equation}
where $\Delta$ is the Laplacian associated to $g$.
\end{itemize}

\section{Admissible K\"ahler metrics conformal to solutions of the Einstein-Maxwell Equations}\label{admisEMsoln}

It is clear that for a given constant $b$, the function $\mathfrak{z} + b$ is a real holomorphy potential. Indeed if $\mathfrak{g} \geq 1$, this is, up to rescale and automorphism, the only type of real holomorphy potentials we have. Also, $(\mathfrak{z} + b)^2$ is positive as a function on $S_n$ iff $|b|>1$.

Let $g$ be an admissible metric given by \eqref{metric} and consider the new non-K\"ahler metric $$h= (\mathfrak{z} + b)^{-2} g,$$ where we require that $|b|>1$.
Using the conformal change formula for scalar curvature and the formulas \eqref{Scal} and \eqref{Lapl} above, we calculate that the scalar curvature of $h$ is given by

$$ Scal(h) = \frac{-({\mathfrak{z}}+b)^2F''({\mathfrak{z}}) + 6({\mathfrak{z}}+b) F'({\mathfrak{z}}) - 12 F({\mathfrak{z}}) + 2 s_{\Sigma} x ({\mathfrak{z}}+b)^2}{(1+x {\mathfrak{z}})}.$$
We want this to be equal to a constant, $A$, so by the exact same argument as on page 7 below (12) in \cite{LeJGP}, we must have that $F({\mathfrak{z}})$ is a quartic.\footnote{Here we just consider the linear operator $y \mapsto ({\mathfrak{z}}+b)^2y''- 6({\mathfrak{z}}+b) y' + 12 y$}
Due to (ii) and (iii) in \eqref{positivityF}, this forces $F$ to have the following form:
\begin{equation}\label{F}
F(\mathfrak{z}) = (1-\mathfrak{z}^2)\left( (1+x \mathfrak{z}) - c(1-\mathfrak{z}^2) \right).
\end{equation}
Plugging this into the formula for $Scal(h)$ above and setting the result equal to $A$, we get the following equations:
\begin{equation}\label{A}
Scal(h) = A= \frac{6 \left(1-6 b^2+b^4+2 b x+2 b^3 x-s_\Sigma x+b^4 s_\Sigma x\right)}{3 b^2-1}
\end{equation}

\begin{equation}\label{c}
c= \frac{-1+3 b x-s_\Sigma x}{2 \left(3 b^2-1\right)}
\end{equation}
and
\begin{equation}\label{b}
\left(xb^2 -2 b +x \right) \left((s_\Sigma x-2) b^2+2 b x-s_\Sigma x \right)=0.
\end{equation}
Thus we see that, for a given $0<x<1$, our job is to solve \eqref{b} for $|b| >1$.  Then, for any such solution we need to check if $F$ from \eqref{F} with the associated $c$ from \eqref{c} satisfies (i) of
\eqref{positivityF}. This in turn amounts to checking if $m(\mathfrak{z}):=\left( (1+x \mathfrak{z}) - c(1-\mathfrak{z}^2) \right)$ is positive for $-1< \mathfrak{z} <1$.

\subsection{Case 1: $\underline{xb^2 -2 b +x=0}$:}\label{Case1}
We first note that \eqref{b} is solved when $xb^2 -2 b +x=0$ and this equation has precisely one solution satisfying $|b|>1$, namely
\begin{equation}\label{Case1soln}
b=\frac{1+\sqrt{1-x^2}}{x}
\end{equation}
 (which indeed gives us $b>1$ since $0<x<1$).
Substituting this $b$ into \eqref{c}, we get
\begin{equation}\label{Case1c}
c=\frac{x^2 \left(2-s_\Sigma x+3 \sqrt{1-x^2}\right)}{4 \left(3-2 x^2+3 \sqrt{1-x^2}\right)}
\end{equation}
Now $m(\mathfrak{z})$ is concave up and
$m(-1) =1-x >0$ while
$$m'(-1) = x-2c = \frac{x \left(6-2 x-4 x^2+s_\Sigma x^2+(6-3x) \sqrt{1-x^2}\right)}{2 \left(3-2 x^2+3 \sqrt{1-x^2}\right)}.$$
When $s_\Sigma \geq 0$, $m'(-1) >0$ and so $m(\mathfrak{z})>0$ for $-1< \mathfrak{z} <1$. Thus
in this case, (i) of \eqref{positivityF} is satisfied for any $0<x<1$ and we have our desired metrics for every K\"ahler class in the K\"ahler cone.
For $s_\Sigma>0$, these metrics, which live on the Hirzebruch surfaces, already appear in \cite{Le15}. For $s_\Sigma=0$, we have the case of
$\Sigma$ being the torus, $T^2$.

Further, for any value of $s_\Sigma$, if $0<x<1$ is sufficiently small, we still have that
$m'(-1) >0$ and so $m(\mathfrak{z})>0$ for $-1<\mathfrak{z} <1$. In particular, we have examples for $\Sigma$ being a Riemann surface of \emph{any} genus.
This completes the proof of Theorem \ref{existencetheorem}.

On the other hand, assume $s_\Sigma <0$ is fixed. One observes easily that $-1<\frac{2+s_\Sigma}{s_\Sigma -2} <1$. Let $\mathfrak{z}_0$ be a fixed value such that $-1< \mathfrak{z}_0< \frac{2+s_\Sigma}{s_\Sigma -2}$.  Now
$$\lim_{x\rightarrow 1}m(\mathfrak{z}_0) = \frac{1}{4}(1+\mathfrak{z}_0) ((2-s_\Sigma)\mathfrak{z}_0 + (2+s_\Sigma)) <0,$$
and thus for $x$ sufficiently close to $1$, $m(\mathfrak{z}_0)<0$ and hence (i) of \eqref{positivityF} fails.

We can be a bit more specific about when we have failure of positivity of $m({\mathfrak{z}})$ over the interval $(-1,1)$ in the case of $s_\Sigma<0$. Indeed, it is not hard to confirm the following observations for this case:
\begin{itemize}
\item The graph of $m({\mathfrak{z}})$ is a concave up parabola (as we already noted)
\item $m'(1) = x+ 2c $ is positive for all $0<x<1$, since $c$ is positive for all $0<x<1$
\item $m'(-1)= \frac{x g(x)}{2 \left(3-2 x^2+3 \sqrt{1-x^2}\right)}$, where \newline
$g(x) = \left(6-2 x-4 x^2+s_\Sigma x^2+(6-3x) \sqrt{1-x^2}\right)$ is a monotone decreasing function over the interval $[0,1]$ such that $g(0) = 12 >0$ and $g(1)=s_\Sigma <0$. Thus, there exists a unique value $x_{s_\Sigma,1} \in (0,1)$ so that
\begin{itemize}
\item For $0<x\leq x_{s_\Sigma,1}$, $m'(-1)>0$ and so $m({\mathfrak{z}})$ is positive over the interval $(-1,1)$.
\item For $x=x_{s_\Sigma,1}$, $m'(-1)=0$ and hence $m({\mathfrak{z}})$ has a global positive minimum (of $1-x$) at ${\mathfrak{z}}=-1$.
\item For $x_{s_\Sigma,1}<x<1$, $m'(-1)<0$ and so $m({\mathfrak{z}})$ is positive over the interval $(-1,1)$ if and only if the discriminant of  $m({\mathfrak{z}})$ is negative (i.e.,  $m({\mathfrak{z}})$ has no roots).
\end{itemize}
\item For a given $x\in (0,1)$, the discriminant of  $m({\mathfrak{z}})$ is given by
$$\frac{x^2D_{s_\Sigma}(x) }{4 \left(3-2 x^2+3 \sqrt{1-x^2}\right)^2},$$ where
$$D_{s_\Sigma}(x) = 12 +12s_\Sigma x - 19 x^2 - 12 s_\Sigma x^3 + (7+{s_\Sigma}^2)x^4 + 6\sqrt{1-x^2}(2+2 s_\Sigma x - 2 x^2 - s_\Sigma x^3)$$
with
\begin{itemize}
\item (formally) $D_{s_\Sigma}(0)= 24 >0$
\item $D_{s_\Sigma}(x_{s_\Sigma,1}) <0$ (obviously)
\item (formally) $D_{s_\Sigma}(1) = {s_\Sigma}^2 >0$
\end{itemize}
\item If we define $d_{s_\Sigma}(x) = D_{s_\Sigma}''(x)$ one may check that
\begin{itemize}
\item (formally) $d_{s_\Sigma}(0)<0$
\item $\displaystyle \lim_{x\rightarrow 1^-}d_{s_\Sigma}(x)>0$
\item $d_{s_\Sigma}(x)$ is monotone increasing over the interval $[0,1]$
\end{itemize}
and so $D_{s_\Sigma}(x)$ has exactly one inflection point (changing from concave down to concave up) over the interval $(0,1)$
\item Thus $D_{s_\Sigma}(x)$ has exactly two roots over the interval $(0,1)$; one occurring in $(0,x_{s_\Sigma,1})$ and one occurring in $(x_{s_\Sigma,1},0)$.
Let us denote the latter root by $x_{s_\Sigma,2}$.
We make the following conclusions
\begin{itemize}
\item For $0<x< x_{s_\Sigma,2}$, positivity of $m({\mathfrak{z}})$ over the interval $(-1,1)$ holds.
\item For $ x_{s_\Sigma,2}<x<1$, positivity of $m({\mathfrak{z}})$ over the interval $(-1,1)$ fails.
\end{itemize}
\end{itemize}

Thus, for $s_\Sigma<0$, there exists a unique value $x_{s_\Sigma,2} \in (0,1)$ such that for $0<x< x_{s_\Sigma,2}$, (i) of \eqref{positivityF} is
satisfied and hence we have our desired metrics and if $x_{s_\Sigma,2}<x<1$, (i) of \eqref{positivityF} is not satisfied and hence we do not have our special types of metrics.

The following, somewhat obscure, little lemma will be useful in Section \ref{EHgengeq2}.

\begin{lemma}\label{littlelemma}
For any $s_\Sigma<0$, $x_{s_\Sigma,2} > \frac{1}{s_\Sigma^2 +2}$.
\end{lemma}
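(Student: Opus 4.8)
The plan is to sidestep any direct analysis of the larger root $x_{s_\Sigma,2}$ of $D_{s_\Sigma}$ --- pinning that down would require locating a zero of a rather unpleasant function --- and instead to lean on the inequality $x_{s_\Sigma,1} < x_{s_\Sigma,2}$ already established above. So I would reduce the lemma to showing that $x^{*} := \frac{1}{s_\Sigma^2 + 2}$ satisfies $x^{*} < x_{s_\Sigma,1}$; once that is done, $x^{*} < x_{s_\Sigma,1} < x_{s_\Sigma,2}$ gives the claim.

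Next I would recall that $x_{s_\Sigma,1}$ is precisely the unique zero in $(0,1)$ of the strictly decreasing function $g(x) = 6 - 2x - 4x^2 + s_\Sigma x^2 + (6-3x)\sqrt{1-x^2}$, which has $g(0) = 12 > 0$. Consequently $x^{*} < x_{s_\Sigma,1}$ is equivalent to $g(x^{*}) > 0$, and the entire lemma collapses to this one numerical inequality. To verify it, note first that $s_\Sigma^2 + 2 \ge 2$ forces $0 < x^{*} \le \frac12$, so on that range $-2x^{*} \ge -1$, $-4(x^{*})^2 \ge -1$, and $(6-3x^{*})\sqrt{1-(x^{*})^2} \ge \frac{9}{2}\cdot\frac{\sqrt3}{2} > 3$. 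The only term that can work against us is $s_\Sigma (x^{*})^2 = \frac{s_\Sigma}{(s_\Sigma^2+2)^2}$, and its absolute value is $< 1$ for every $s_\Sigma$ since $(s_\Sigma^2+2)^2 \ge 4 s_\Sigma^2 + 4 > |s_\Sigma|$, the last inequality because $4t^2 - t + 4$ has negative discriminant. Adding the five contributions gives $g(x^{*}) > 6 - 1 - 1 - 1 + 3 = 6 > 0$, as needed.

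I do not anticipate any genuine difficulty here: once the statement is recast as $g(x^{*}) > 0$ the estimate is completely elementary, and the bound $\frac{1}{s_\Sigma^2+2}$ in the lemma is plainly engineered so that $x^{*}$ stays well inside $(0,\frac12)$ while $|s_\Sigma|(x^{*})^2 = \frac{|s_\Sigma|}{(s_\Sigma^2+2)^2}$ remains small. The only point that needs attention is to route the argument through $x_{s_\Sigma,1}$ rather than through $D_{s_\Sigma}$ directly: a direct attempt to fix the sign of $D_{s_\Sigma}$ at $x^{*}$ would fail, since $D_{s_\Sigma}(x^{*}) \to 24 > 0$ as $s_\Sigma \to -\infty$.
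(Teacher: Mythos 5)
Your proposal is correct and follows exactly the paper's own route: the paper likewise proves the lemma by observing that $g\bigl(\tfrac{1}{s_\Sigma^2+2}\bigr)>0$ for the monotone decreasing function $g$, whence $\tfrac{1}{s_\Sigma^2+2}<x_{s_\Sigma,1}<x_{s_\Sigma,2}$. The only difference is that you spell out the elementary estimates behind the positivity of $g$ at that point, which the paper leaves as a ``simple fact.''
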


\begin{proof}
This follows from the simple fact that at $x=\frac{1}{s_\Sigma^2 +2}$, the value of the monotone decreasing function \newline
$g(x) = \left(6-2 x-4 x^2+s_\Sigma x^2+(6-3x) \sqrt{1-x^2}\right)$ from above is still positive and so
$ \frac{1}{s_\Sigma^2 +2}< x_{s_\Sigma,1} < x_{s_\Sigma,2}$.
\end{proof}

\begin{example}\label{ex1}
Let us assume that $n=1$ and the genus of $\Sigma$, $\mathfrak{g}=2$. Then $s_{\Sigma}= -2$ and the discriminant of $m({\mathfrak{z}})$ is given by
$$D_{-2}(x) = 12 -24 x - 19 x^2 +24 x^3 + 11 x^4 + 12\sqrt{1-x^2}(1-2 x - x^2 + x^3).$$
One may check that numerically $ x_{s_\Sigma,2} \approx 0.97367$ ( while $ x_{s_\Sigma,1}$ from above is about $0.93578$).
\end{example}

As the above discussion shows, just as is the case for extremal K\"ahler metrics, we seem to run into a case where we are not able to exhaust the entire K\"ahler cone with these special admissible metrics. It would be very interesting to find out exactly what happens in the ``bad'' K\"ahler classes; are there no K\"ahler metrics conformal to metrics solving the Einstein-Maxwell equations, or are there just none of this particular type we are seeking above?

\begin{remark}
Note that the only case, where a metric $g$ as above is actually extremal and $h$ is Einstein, is the case where $s_\Sigma=2$ (so $M$ is the first Hirzebruch surface) and $x$ is a certain specific value (it is not a pretty number, but it is explicit). This is of course just the Page metric all over again and was treated in \cite{Le15}.

However, if one allows $0<b<1$, and hence allow for the case where $h= (\mathfrak{z} +b)^{-2}g$ is not defined along the sub manifold $\mathfrak{z}^{-1}(-b)$, there is an extra solution (for any $0<x<1$ such that $-3+2 x^2+3 \sqrt{1-x^2} \neq  0$) with
$$b= \frac{1-\sqrt{1-x^2}}{x}$$
and
$$F(\mathfrak{z}) = \frac{(1-\mathfrak{z}^2)m(\mathfrak{z})}{4 \left(-3+2 x^2+3 \sqrt{1-x^2}\right)},$$
where
$$
\begin{array}{ccl}
m(\mathfrak{z}) &= &  -12+10 x^2-s_\Sigma x^3+(12-3 x^2) \sqrt{1-x^2}\\
\\
&+ & \left(-12 x +8 x^3 +12 x \sqrt{1-x^2} \right) \mathfrak{z}\\
\\
&+ & \left( x^2(s_\Sigma x-2) +3 x^2 \sqrt{1-x^2} \right) \mathfrak{z}^2.
\end{array}
$$
One can check directly that when $s_\Sigma <1$ (so first and second Hirzebruch surface are avoided), there is precisely one value $0<x_0<1$ such that the corresponding $F(\mathfrak{z})$ defines an extremal K\"ahler metric $g$. For this $x_0$,  $h$ will be an Einstein metric defined away from the sub manifold $\mathfrak{z}^{-1}(\frac{\sqrt{1-x_0^2}-1}{x_0})$. This is in line with an observation made in e.g.
Proposition 3 of \cite{t-f02}.
\end{remark}

\subsection{Case 2: $\underline{(s_\Sigma x-2) b^2+2 b x-s_\Sigma x =0}$:}\label{Case2}

Since $s_\Sigma x <2$ we have that $p(b) := (s_\Sigma x-2) b^2+2 b x-s_\Sigma x$ is a concave down parabola such that $p(\pm 1) = \pm 2(x\mp 1) <0$ and
 $p'(\pm 1) = \pm 2(s_\Sigma x - 2 \pm x)$, we easily observe that for $s_\Sigma <2$ (hence $s_\Sigma \leq 1$), there are no solutions to $p(b)=0$ with $|b|>1$.

However when $s_\Sigma=2$, the equation becomes
$$(x-1) b^2+ x b  - x =0.$$
As long as $x > 4/5$ this gives us two solutions
$$ b=\frac{x\pm \sqrt{x(5x - 4)}}{2(1- x)}.$$
When $x=4/5$ we get the same solution as in Case 1 and as $x$ spans $[\frac{4}{5},1)$, we see that $b_1:=\frac{x + \sqrt{x(5x - 4)}}{2(1- x)}$ increases monotonically from $2$ to $+\infty$ while $b_2 := \frac{x - \sqrt{x(5x - 4)}}{2(1- x)}$ decreases monotonically from $2$ to $1$. Thus $b_i >1$ for $i=1,2$.

Now, with $s_\Sigma=2$ and $(x-1) b^2+ x b  - x =0$ we get from \eqref{c} that
$$ c=  \frac{-1+3 b x-2 x}{2 \left(3 b^2-1\right)} = (1-x)/2.$$
Thus for a given $x \in (4/5,1)$, $F(\mathfrak{z})$ is the same for $b_1$ and $b_2$. Similarly to above, it is easy to check that (i) of \eqref{positivityF} is satisfied in this case. So, we have one K\"ahler metric with two different EM solutions $h_1 = (\mathfrak{z} + b_1)^{-2} g$ and $h_2= (\mathfrak{z} + b_2)^{-2}g$.

\begin{remark}
One may check that the condition $x=4/5$ corresponds to the condition $u/v=9$ in Theorem B of \cite{Le15}, so we are certainly just recasting the bifurcation observed by LeBrun. What we discover here is that  $h_1$ and $h_2$ are conformal to one and the same admissible K\"ahler metric. In particular, $h_1$ and $h_2$ belong to the same conformal class.
One may confirm this very surprising observation directly from \cite{Le15}:

The K\"ahler metric(s) in question are given by (12) and (15) of \cite{Le15}. For $k=1$ and $\alpha$ as given in (18) of \cite{Le15}, let us change coordinate to $\hat{x} = x + \alpha$ (with range $(a+\alpha, b+\alpha)$).  Then we have
\begin{equation}\label{LeBrun}
g=\hat{x} \left[\frac{d\hat{x}^2}{2\Psi} + 2(\sigma_1^2 + \sigma_2^2)\right] + \frac{2\Psi}{\hat{x}}\sigma_3^2
\end{equation}
with
$$ \Psi = \frac{((\alpha + b) -\hat{x})(\hat{x} - (\alpha +a))}{b-a}\left[ \hat{x} + E ((\alpha + b) -\hat{x})(\hat{x} - (\alpha +a))\right].$$
Now without loss we set $u-v$ in (19) of \cite{Le15} equal to $2\pi$ and hence
have $a$ and $b$ of \cite{Le15} given as follows
$$a= \frac{1}{2\frak{z}}$$
and
$$b= (1+2\frak{z})( \frac{1}{2\frak{z}}) = 1+ \frac{1}{2\frak{z}},$$
where, for this remark only, $\frak{z}$ refers to page 31 of \cite{Le15} and not the moment map coordinate from our text.
As pointed out in \cite{Le15}, any value $\frac{u}{v}>9$, arises from two different values of $\frak{z}>0$ that are reciprocals of each other and this is how $h_1$ and $h_2$ arise.

Now we make the following observations:
With $a$ and $b$ given in terms of
$\frak{z}$ as above, we have that
$$a+\alpha = a-\frac{4a^2b}{(a+b)^2} = \frac{\frak{z}}{2(\frak{z}+1)^2}$$
is invariant under $\frak{z} \mapsto 1/\frak{z}$ and hence so is $b+\alpha = a+\alpha+1$.
Finally, we observe from (16) of \cite{Le15} that
$$E =  \frac{\alpha - 2 a}{a^2 + 4 a b + b^2} = \frac{\frak{-z}}{(\frak{z}+1)^2},$$
and so this is also invariant under $\frak{z} \mapsto 1/\frak{z}$.  Thus $g$ in \eqref{LeBrun} is  geometrically the same for $\frak{z}$ and $1/\frak{z}$.
\end{remark}

\section{The Einstein-Hilbert functional}\label{EHsection}

\subsection{The admissible K\"ahler classes revisited}\label{classes}
Each admissible ruled surface $S_n$, $n\in {\mathbb Z}^+$, belong to one of only two possible diffeomorphism types; the product $S^2 \times \Sigma$ or the unique twisted $S^2$-bundle over $\Sigma$, $S^2 \tilde{\times} \Sigma$  . In fact, if $n$ is even, $S_n$ is of the former type, and if $n$ is odd, $S_n$ is of the latter type. Although not admissible (in the sense above), we will use $S_0$ to mean the trivial ruled surface ${\mathbb C}{\mathbb P}^1 \times \Sigma$.

We will now express the (admissible) K\"ahler class from \eqref{class1} in a basis that only depends on the parity of $n$. Let $n=2k$, when $n$ is even and $n=2k+1$, when $n$ is odd. Further, let $E$ denote (the Poincar\'e dual) of the section of $S_n \rightarrow \Sigma$ which has self-intersection zero, in the case of $n$ being even, and self-intersection one in the case of $n$ being odd. In fact, we may write  $E= E_0-k C$

It is now standard procedure to show that \eqref{class1} implies that
\begin{equation}\label{class2}
[\omega] = 4\pi (E +g(x,k)\, C),
\end{equation}
where
$$g(x,k)= \left\{ \begin{array}{cl}
\frac{k}{x}, & \text{when}\,n=2k\,\text{is even}\\
\\
\frac{2k+1-x}{2x}, & \text{when}\,n=2k+1\,\text{is odd}\\
\end{array}\right. $$
Note that if we write the admissible K\"ahler class as
$$\Omega = 4\pi(E+ p\,C),$$ then
$$x =  \left\{ \begin{array}{cl}
\frac{k}{p}, & \text{when}\,n=2k\,\text{is even}\\
\\
\frac{2k+1}{2p+1}, & \text{when}\,n=2k+1\,\text{is odd}\\
\end{array}\right. $$

Now we see that for a fixed $p$, the cohomology class $\Omega = 4\pi(E+ p\,C)$  on the smooth manifold  $S^2 \times \Sigma$ is  an K\"ahler class on
the admissible ruled surface $S_{2k}$ iff $1\leq k<p$. In that case, $\Omega$ has the form of \eqref{class1} with $x=k/p$. In the case, where the smooth manifold is instead $S^2 \tilde{\times} \Sigma$, $\Omega$ is a K\"ahler class on $S_{2k+1}$ iff $0\leq k<p$. In that case, $\Omega$ has the form of \eqref{class1} with  $x=(2k+1)/(2p+1)$.

\begin{remark}\label{productmetric}
Note that in the case where the smooth manifold  is $S^2 \times \Sigma$, $\Omega$ is a K\"ahler class on $S_0$ as long as $p>0$. In fact, $E$ may be viewed as
the cohomology class $[\omega_{FS}]$ and $C$ may be viewed as the cohomology class $[\hat{\omega_\Sigma}]$ where $\omega_{FS}$ denotes the unit volume Fubini-Study metric (with constant scalar curvature $8\pi$) and $\hat{\omega_\Sigma}$ denotes the unit volume constant scalar curvature metric on $\Sigma$ (with constant scalar curvature $8\pi(1-\mathfrak{g})$).

If we consider the K\"ahler CSC (and Einstein-Maxwell solution) product metric on $S_0$ corresponding to representation $4\pi(\omega_{FS}+p \hat{\omega_\Sigma})$ of $\Omega$, we can easily calculate the value of ${\mathfrak{S}}$ to be
$$Scal\, \sqrt{Vol} = \frac{8\pi\left(1+\frac{( 1-\mathfrak{g})}{p}\right)}{4\pi} \sqrt{(4\pi)^2 p} = 8\pi \sqrt{p}\left(1+\frac{(1-\mathfrak{g})}{p}\right).$$

Since $8\pi \sqrt{p}\left(1+\frac{(1-\mathfrak{g})}{p}\right) > 8\pi\sqrt{6}$ for $p$ sufficiently large, we obviously have many examples of these K\"ahler CSC product metrics that are NOT Yamabe minimizers in their conformal class. On the other hand, for $\mathfrak{g} \geq 2$ we can pick values of $p>0$ such that  $Scal\, \sqrt{Vol} \leq 0$, so some of these product metrics ARE Yamabe minimizers.
\end{remark}

\subsection{The Einstein-Hilbert Functional applied to the admissible Einstein-Maxwell solutions}

For our solutions $h$ from Section \ref{admisEMsoln} with constant $Scal(h)$, the Einstein-Hilbert functional equals
$Scal(h)\, Vol(h)^{1/2}$ where
$$Vol(h) = \int_{S_n} d\mu_h.$$
Since $d\mu_h = (\mathfrak{z}+b)^{-4} d\mu_g$, where $g$ is the admissible K\"ahler metric in questions, we have
$$
\begin{array}{ccl}
Vol(h) & = & \int_{S_n} (\mathfrak{z}+b)^{-4} \omega^2/2 \\
\\
& =  & \int_{S_n} (\mathfrak{z}+b)^{-4}(\mathfrak{z} + 1/x) \omega_\Sigma \wedge d\mathfrak{z} \wedge \theta \\
\\
& = & 2\pi Vol(\omega_\Sigma) \int_{-1}^1 (\mathfrak{z}+b)^{-4}(\mathfrak{z} + 1/x) \, d\mathfrak{z}\\
\\
& = &( 2\pi)^2 n  \int_{-1}^1 (\mathfrak{z}+b)^{-4}(\mathfrak{z} + 1/x) \, d\mathfrak{z}\\
\\
& =& ( 2\pi)^2 n  \frac{2(3b^2- 4 b x +1)}{3x(b^2-1)^3 }.
\end{array}
$$
Using \eqref{A} we then get
\begin{equation}\label{EHfunc1}
\begin{array}{cl}
&Scal(h) Vol(h)^{1/2} \\
\\
= & \frac{12\pi\sqrt{n} \left(1-6 b^2+b^4+2 b x+2 b^3 x-s_\Sigma x+b^4 s_\Sigma x\right)}{3 b^2-1}\sqrt{  \frac{2(3b^2- 4 b x +1)}{3x(b^2-1)^3 }}.
\end{array}
\end{equation}

Since $h$ has constant scalar curvature, the Yamabe constant of $[h]$ must satisfy that $Y_{[h]} \leq Scal(h) Vol(h)^{1/2}$.

On the other hand, for each $x\in (0,1)$, the CSC metric $h$ is conformal to an (admissible) K\"ahler metric $g$ in the class
given by \eqref{class1} (or \eqref{class2}). LeBrun's work \cite{Le97} then implies that
$$Y_{[h]} = Y_{[g]} \leq \frac{4\pi c_1 \cdot [\omega]}{\sqrt{[\omega]^2/2}},$$
where equality happens if and only if the K\"ahler metric is actually a Yamabe minimizer in $[g]$.
Since we know that $g$ is not even CSC here\footnote{In fact, there are no CSC K\"ahler metrics on $S_n$ \cite{at}}, we know that in our present case the inequality above is sharp.

Using the formulas on pages 564--565 in \cite{acgt}, we easily calculate that for any genus $\mathfrak{g}$ and any admissible K\"ahler metric $g$ in $[\omega]$ we have
$$\frac{4\pi c_1 \cdot [\omega]}{\sqrt{[\omega]^2/2}} =   \frac{\int_{S_n} Scal(g) \, d\mu_g}{\sqrt{\int_{S_n} d\mu_g}}= \frac{4 \pi (2+2 s_\Sigma x) \sqrt{ n}}{\sqrt{2x}} .$$
Thus, we have for the moment the rough estimate
\begin{equation}\label{KYestimate}
Y_{[h]} <  \frac{4 \pi (2+2 s_\Sigma x) \sqrt{ n}}{\sqrt{2x}}.
\end{equation}
We shall see below that \eqref{EHfunc1}  and the fact that $Y_{[h]} < Scal(h) Vol(h)^{1/2}$ will improve this estimate a bit in some cases. Of course \eqref{KYestimate} is only an improvement over Aubin's estimate as long as
$ \frac{4 \pi (2+2 s_\Sigma x) \sqrt{ n}}{\sqrt{2x}} < 8\pi\sqrt{6}$.
Likewise, if $Scal(h) Vol(h)^{1/2} > 8\pi\sqrt{6}$, then all we can say is that $h$ is NOT a Yamabe minimizer of $[h]$.

\subsection{First Hirzebruch surface:}
The Einstein-Hilbert functional has already been treated thoroughly for Hirzebruch surfaces in \cite{Le15}, so we shall not treat this case in general. We will restrict ourselves a
quick discussion of the interesting Case 2 solutions in Section \ref{Case2} on the first Hirzebruch Surface. Here we have two different CSC metrics $h_1$ and $h_2$ in the same conformal class.

For a given $x\in (4/5,1)$, we substitute $(x-1) b^2+ x b  - x =0$, $s_\Sigma =2$, and $n=1$ into \eqref{EHfunc1} to get
\begin{equation}
Scal(h_i) Vol(h_i)^{1/2} = 4 \pi \sqrt{6} \sqrt{\frac{4x-1}{x}}, \quad i=1,2.
\end{equation}
Apparently $Scal(h) Vol(h)^{1/2}$ has the same value for $h_1$ and $h_2$. This fact may also be verified directly from equation (22) of \cite{Le15}.
As observed in \cite{Le15}, as $x\rightarrow 1$, we have that $$Scal(h_i) Vol(h_i)^{1/2}  \rightarrow 12 \pi \sqrt{2},$$ which is the value of
$Scal(h) Vol(h)^{1/2}$ for the standard Fubini-Study metric on ${\mathbb C}{\mathbb P}^2$. Notice that  the right hand side of \eqref{KYestimate}, which in this case is
$ \frac{4 \pi \sqrt{2} (1+2 x)}{\sqrt{x}}$, has the same limit for
$x \rightarrow 1$.

We observe that for $x\in (4/5,1)$
$$4 \pi \sqrt{6} \sqrt{\frac{4x-1}{x}} <   \frac{4 \pi \sqrt{2} (1+2 x)}{\sqrt{x}} < 8\pi \sqrt{6},$$ so
$$ Y_{[h_1]}=   Y_{[h_2]} \leq 4 \pi \sqrt{6} \sqrt{\frac{4x-1}{x}}$$
gives an improved estimate in comparison to \eqref{KYestimate}. This does not necessarily imply that $h_1$ and $h_2$ are Yamabe minimizers, but in any case we have a new example showing the non-uniqueness of CSC metrics in a conformal class.

\subsection{Case where of $\Sigma = T^2$:}
This is the case where $\mathfrak{g} = 1$ and $s_\Sigma=0$. Since $\mathfrak{g} <2$, we know from Section \ref{Case1} that we have genuine solutions for any value of $x \in (0,1)$. For a given $x\in (0,1)$, we substitute \eqref{Case1soln} and $s_\Sigma =0$ into \eqref{EHfunc1} to get
\begin{equation}\label{EHfunc2}
Scal(h) Vol(h)^{1/2} = 4 \sqrt{6} \sqrt{n} \pi  \sqrt{\frac{1-x^2}{x \left(1+2 \sqrt{1-x^2}\right)}}.
\end{equation}
We will apply \eqref{EHfunc2} in two different settings below:

\subsubsection{The Yamabe constant for fixed $S_n$:}
Assuming that $n$ is fixed for the moment,
one may check that this is a monotone decreasing function over the interval $(0,1)$ and that $$\lim_{x \rightarrow 0} Scal(h) Vol(h)^{1/2} = +\infty$$ while
$$\lim_{x \rightarrow 1} Scal(h) Vol(h)^{1/2} = 0.$$ Since for $x>0$ sufficiently small we will then have $Scal(h) Vol(h)^{1/2} > 8\pi\sqrt{6}$ we can conclude that the CSC metrics $h$ are NOT Yamabe minimizers for $x>0$ sufficiently small. Whether they are ever Yamabe minimizers remains unknown.

In the present case ($s_\Sigma =0$), \eqref{KYestimate} becomes
$$Y_{[h]} <  \frac{8 \pi \sqrt{ n}}{\sqrt{2x}} .$$
Not surprisingly, the right hand side of \eqref{EHfunc2} is less than $ \frac{8 \pi \sqrt{ n}}{\sqrt{2x}}$ for all $x\in (0,1)$. Since $Scal(h) >0$ here, we do not know if any of the CSC metrics $h$ are in fact Yamabe minimizers, but their Einstein-Hilbert functional values do offer an improvement in the estimate of $Y_{[h]}$. Notice that as $x\rightarrow 0$, the difference between the two estimates approaches zero, whereas when
$x\rightarrow 1$, the difference approaches $4 \pi \sqrt{n/2}$.

\subsubsection{The Einstein-Hilbert functional on $\mathscr{G}_\Omega$:}\label{EHg=1}

Let us now fix  the cohomology class $\Omega = 4\pi(E+ p\,C)$  on the smooth manifold  $S^2 \times T^2$. Then, using the discussion in Section \ref{classes}, we get an
Einstein-Maxwell solution from Section \ref{Case1} $h_k$ in $\mathscr{G}_\Omega$ for each choice of $k=1,...,\lceil p \rceil -1$ where $x=k/p$. Using \eqref{EHfunc2} we have
\begin{equation}\label{EHfunc3}
Scal(h_k) Vol(h_k)^{1/2} = 8\pi  \sqrt{3}   \sqrt{\frac{p^2-k^2}{p+2 \sqrt{p^2-k^2}}}.
\end{equation}
Note that if we formally substitute $k=0$ into the right hand side of \eqref{EHfunc3} we get the value of $Scal(h_0) Vol(h_0)^{1/2}$, when $h_0$ is the CSC K\"ahler product metric discussed in Remark \ref{productmetric}. Since $h_0$ is also an Einstein-Maxwell solution we may say that for each choice of $k=0,...,\lceil p \rceil -1$ we have an
Einstein-Maxwell solution with metric $h_k$ such that $Scal(h_k) Vol(h_k)^{1/2}$ is given by \eqref{EHfunc3}.

It is easy to confirm that the right hand side of  \eqref{EHfunc3} is strictly decreasing for $k=0,...,\lceil p \rceil -1$ and in particular we observe $\lceil p \rceil$ different values of
${\mathfrak S}$ for the fixed $\mathscr{G}_\Omega$.  Note that we could also consider  CSC local product K\"ahler metrics on ruled surfaces of the type ${\mathbb P}({\mathcal O} \oplus {\mathcal L}_0) \rightarrow T^2$, where $L_0$ is a non-trivial line bundle of degree zero, but the value of $Scal(h) Vol(h)^{1/2}$ would simply duplicate the value for
the product metric from Remark \ref{productmetric}, so we will not pursue this any further.

Next we fix the cohomology class $\Omega = 4\pi(E+ p\,C)$  on the smooth manifold $S^2 \tilde{\times} T^2$. Imitating the steps above we get an Einstein-Maxwell solution
$h_k$ in $\mathscr{G}_\Omega$ for each choice of $k=0,...,\lceil p \rceil -1$ where \newline
$x=(2k+1)/(2p+1)$ and now
\begin{equation}\label{EHfunc4}
Scal(h_k) Vol(h_k)^{1/2} = 4\pi  \sqrt{6}   \sqrt{\frac{(2p+1)^2-(2k+1)^2}{(2p+1)+2 \sqrt{(2p+1)^2-(2k+1)^2}}}.
\end{equation}
Again, we can confirm that the right hand side of  \eqref{EHfunc4} is strictly decreasing for $k=0,...,\lceil p \rceil -1$ and hence - also in this case - we display $\lceil p \rceil$ different values of ${\mathfrak S}$ for the fixed $\mathscr{G}_\Omega$.

\begin{remark} On $S^2 \tilde{\times} T^2$ there exists an additional complex structure from the ${\mathbb C}{\mathbb P}^1$ bundle over $T^2$ of the form
${\mathbb P}({\mathcal E}) \rightarrow \Sigma$, where ${\mathcal E} \rightarrow$ is a rank two indecomposable stable holomorphic vector bundle \cite{atiyah55, atiyah57, suwa}.
Without loss we assume that the degree of ${\mathcal E}$ is one and so $E$ may be viewed as the zero section on ${\mathbb P}({\mathcal E}) \rightarrow \Sigma$.
For this complex structure $\Omega = 4\pi(E+ p\,C)$ is a K\"ahler class if and only if $p > -1/2$ \cite{fujiki}. Moreover, due to the stability of $E$ we get a local
product CSC metric $\tilde{h}_0$. Indeed, by recognizing that $E$ is given by ``$e+f/2$'' from \cite{fujiki}, where $f = C$, and using Lemma 1 of \cite{fujiki}, we get that for this local product CSC metric we have
$$Scal(\tilde{h}_0) Vol(\tilde{h}_0)^{1/2} = 4\pi\sqrt{2}\sqrt{2p+1}.$$
Now this is in fact the limit of the right hand side of \eqref{EHfunc4} as $k \rightarrow -1/2$, i.e. $2k+1 \rightarrow 0$.

\end{remark}

Using Proposition 2 of \cite{Le15} we may conclude that Theorem E of \cite{Le15} also holds for $S^2 \times T^2$ and the twisted $S^2$ bundle over $T^2$.

\subsection{Case where $\Sigma$ has genus at least two:}
This is the case where $\mathfrak{g} \geq 2$ and $s_\Sigma=2(1-\mathfrak{g})/n <0$.
For a given $x\in (0,1)$, \emph{satisfying that $F$ from \eqref{F} with the associated $c$ from \eqref{c} satisfies (i) of
\eqref{positivityF}}, we substitute \eqref{Case1soln} and $s_\Sigma=2(1-\mathfrak{g})/n$ into \eqref{EHfunc1} to get
\begin{equation}\label{EHfunc5}
\begin{array}{ccl}
Scal(h) Vol(h)^{1/2} & = & 4 \sqrt{6} \sqrt{n} \pi  \sqrt{\frac{1-x^2}{x \left(1+2 \sqrt{1-x^2}\right)}} \\
\\
& + &  8 \pi \sqrt{6/n}  (1-\mathfrak{g})\sqrt{\frac{x}{(1+2 \sqrt{1-x^2})}}.
\end{array}
\end{equation}

\subsubsection{The Yamabe constant for fixed $S_n$:}
Assuming that $n$ and $\mathfrak{g}$ are fixed for the moment,
one may check that this is a monotone decreasing function over the interval $(0,1)$ and that $$\lim_{x \rightarrow 0} Scal(h) Vol(h)^{1/2} = +\infty$$ while
$$\lim_{x \rightarrow 1} Scal(h) Vol(h)^{1/2} <0.$$ For $x>0$ sufficiently small we will then have $Scal(h) Vol(h)^{1/2} > 8\pi\sqrt{6}$ as well as
$F$ from \eqref{F} with the associated $c$ from \eqref{c} satisfying (i) of
\eqref{positivityF}.  Thus we can conclude, also in the case, that we have CSC metrics $h$ that are NOT Yamabe minimizers for $x>0$ sufficiently small.

On the other hand, the fact that $\lim_{x \rightarrow 1} Scal(h) Vol(h)^{1/2} <0$ is only a formal observation, since we know that for $0<x<1$ sufficiently close to $1$,
(i) of \eqref{positivityF} fails. We will therefore restrict ourselves to an example.

\begin{example}
Revisiting Example \ref{ex1} we assume that that $n=1$ and the genus of $\Sigma$, $\mathfrak{g}=2$. Then
\begin{equation}\label{EHfunc6}
Scal(h) Vol(h)^{1/2} = 4 \sqrt{6} \pi  \left( \sqrt{\frac{1-x^2}{x \left(1+2 \sqrt{1-x^2}\right)}} - 2 \sqrt{\frac{x}{(1+2 \sqrt{1-x^2})}}\right),
\end{equation}
which reaches negative values by (approximatively) $x=0.44722$ and safely before $x= x_{s_\Sigma,2} \approx 0.97367$.
When $s_\Sigma =-2$, \eqref{KYestimate} becomes
$$Y_{[h]} < \frac{8 \pi (1-2 x)}{\sqrt{2x}} .$$
As in the case of $\Sigma =T^2$ we have that the right hand side of this inequality is larger than the right hand side of \eqref{EHfunc6} and thus
$Scal(h) Vol(h)^{1/2}$ offers an improvement of the estimate of $Y_{[h]}$ also in this case. In particular, it is interesting to notice the interval (approx. $(0.44722, 0.5)$) where
$ \frac{8 \pi (1-2 x)}{\sqrt{2x}} >0$ and $Scal(h) Vol(h)^{1/2} <0$. Here we have  that the estimate \eqref{KYestimate} does not predict the fact that the conformal class has a constant scalar curvature Yamabe minimizer of negative scalar curvature.
\end{example}

\subsubsection{The Einstein-Hilbert functional on $\mathscr{G}_\Omega$:}\label{EHgengeq2}
Due to the existence issues of our special Einstein-Maxwell solutions in the higher genus case, we have to be a bit more careful here. However, as the argument below will show, we can still confirm that Theorem E of \cite{Le15} also holds for $S^2 \times \Sigma$ as well as the twisted bundle $S^2\tilde{\times}\Sigma$.

Consider an arbitrary value $K\in {\mathbb N}$ and let
$p_K$ be such that
\begin{equation}\label{pk}
\forall k\in \{ 1,...,K\}, \quad p_K > \frac{(1-\mathfrak{g})^2}{k} + 2k.
\end{equation}
In particular, note that $p_K >K$.
Then let us fix  the cohomology class $\Omega = 4\pi(E+ p_K\,C)$  on the smooth manifold  $S^2 \times \Sigma$.
For any $k = 1,....,K$ we know that from Section \ref{classes} that $\Omega$ is  an K\"ahler class on
the admissible ruled surface $S_{2k}$ and $\Omega$ has the form of \eqref{class1} with $x=k/p_K$.
Since $\frac{(1-\mathfrak{g})^2}{k}= s_\Sigma$, we have from \eqref{pk} that
$$x = k/p_K < \frac{1}{ \left( \frac{(1-\mathfrak{g})}{k}\right)^2 + 2} = \frac{1}{s_\Sigma^2 +2}$$
and so, by Lemma \ref{littlelemma}
we get an
Einstein-Maxwell solution from Section \ref{Case1} $h_k$ in $\mathscr{G}_\Omega$. Using \eqref{EHfunc5} we have
\begin{equation}\label{EHfunc7}
\begin{array}{rcl}
Scal(h_k) Vol(h_k)^{1/2} & = & 8\pi  \sqrt{3}   \sqrt{\frac{p_K^2-k^2}{p_K+2 \sqrt{p_K^2-k^2}}}  \\
\\
&+ &  8 \pi \sqrt{3}  (1-\mathfrak{g})\sqrt{ \frac{1}{p_K+2 \sqrt{p_K^2-k^2}}}.
\end{array}
\end{equation}
We observe that the right hand side of  \eqref{EHfunc7} is strictly decreasing for $k=1,...,K$ and in particular we observe $K$ different values of
${\mathfrak S}$ for the fixed $\mathscr{G}_\Omega$.

Consider an arbitrary value $K\in {\mathbb N}$ but now let
$\tilde{p}_K$ be such that
\begin{equation}\label{tildepk}
\forall k\in \{ 0,...,K\}, \quad 2\tilde{p}_K +1 > \frac{2(1-\mathfrak{g})^2}{2k+1} + 2(2k+1) .
\end{equation}
In particular, note that $\tilde{p}_K >K$.
Then we fix the cohomology class $\Omega = 4\pi(E+ \tilde{p}_K\,C)$  on the smooth manifold $S^2 \tilde{\times} \Sigma$.
For any $k = 0,....,K$ we know that from Section \ref{classes} that $\Omega$ is  an K\"ahler class on
the admissible ruled surface $S_{2k+1}$ and $\Omega$ has the form of \eqref{class1} with $x=\frac{2k+1}{2\tilde{p}_K+1}$.
Since $\frac{2(1-\mathfrak{g})^2}{2k+1}= s_\Sigma$, we have from \eqref{tildepk} that
$$x =\frac{2k+1}{2\tilde{p}_K+1} < \frac{1}{ \left( \frac{2(1-\mathfrak{g})}{2k+1}\right)^2 + 2} = \frac{1}{s_\Sigma^2 +2}$$
and so, by Lemma \ref{littlelemma}
we get an
Einstein-Maxwell solution from Section \ref{Case1} $h_k$ in $\mathscr{G}_\Omega$ for each choice of  $k=0,...,K$ where $x=(2k+1)/(2\tilde{p}_K+1)$. In this case,
\begin{equation}\label{EHfunc8}
\begin{array}{ccl}
Scal(h_k) Vol(h_k)^{1/2} & = & 4\pi  \sqrt{6}   \sqrt{\frac{(2\tilde{p}_K+1)^2-(2k+1)^2}{(2\tilde{p}_K+1)+2 \sqrt{(2\tilde{p}_K+1)^2-(2k+1)^2}}} \\
\\
&+ & 8 \pi \sqrt{6}  (1-\mathfrak{g}) \sqrt{\frac{1}{1+2 \tilde{p}_K+4\sqrt{\tilde{p}_K(1+\tilde{p}_K)-k (1+k)}}}.
\end{array}
\end{equation}
Again, we can confirm that the right hand side of  \eqref{EHfunc8} is strictly decreasing for $k=0,...,K$ and hence  in this case we display $K+1$ different
values of ${\mathfrak S}$ for the fixed $\mathscr{G}_\Omega$. This observation, together with the similar conclusion from Section \ref{EHg=1}, proves Theorem \ref{modulispacetheorem}.

\section{Addendum: The Apostolov-Maschler Futaki Invariant and Stability}\label{amf}
After we submitted our paper for publication, Vestislav Apostolov and Gideon Maschler posted a beautiful paper \cite{am} developing the general theory of
conformally K\"ahler Einstein-Maxwell metrics. These metrics generalize strongly hermitian Einstein-Maxwell solutions to higher dimensions.
Similarly to what Donaldson and Fujiki did for the theory of constant scalar curvature \cite{Don97,fujiki2},
they put the theory in a moment-map setting.

We will now give a short discussion on how our examples
from Section \ref{admisEMsoln} might fit into this general picture.

Observe that the system of equations \eqref{c} and \eqref{b} of Section \ref{admisEMsoln} is equivalent to the system consisting of
\begin{equation}\label{newc}
\begin{array}{ccl}
c & = & \frac{-1+3 b x-s_\Sigma x}{2 \left(3 b^2-1\right)} \\
\\
&+&  \frac{3x\left(b^2x -2 b +x \right) \left((s_\Sigma x-2) b^2+2 b x-s_\Sigma x \right)}{2(3b^2-1)((b^2-3) x^2 + 4 b x + (1-3b^2))}
\end{array}
\end{equation}
together with \eqref{b}. Note that \eqref{newc} is well-defined for all $0<x<1$ and all $|b| >1$.

If we substitute \eqref{newc} into \eqref{F} from Section \ref{admisEMsoln} we obtain a polynomial $F_b({\mathfrak z})$ for each choice of $|b|>1$ and $0<x<1$.
This polynomial clearly satisfies the end point conditions of \eqref{positivityF}.
\underline{As long as (i) of \eqref{positivityF} is satisfied} (which is very much a non-trivial assumption for higher genera of $\Sigma$), we then have an admissible K\"ahler metric  and one can check directly that for $h = ({\mathfrak z} +b)^{-2} g$, the scalar curvature is an affine function of ${\mathfrak z}$, i.e., a killing potential.
So as  Apostolov and Maschler points out, $F_b({\mathfrak z})$ is an analogue of the so-called {\it extremal polynomial} used in the theory of extremal K\"ahler metrics.
Further, \eqref{b} will then be equivalent with the vanishing of the Apostolov-Maschler Futaki invariant as defined in Section 2 of \cite{am}.

In fact, similarly to the calculations in Section 5.3 of \cite{am}, we may take any convenient function satisfying all the conditions of \eqref{positivityF} (e.g. $F(z) = (1-{\mathfrak z}^2)(1+x {\mathfrak z})$) and, using the formulae in Corollary 1 in \cite{am}, calculate the Apostolov-Maschler Futaki invariant. From that we easily discover that \eqref{b} is equivalent with the vanishing of the Apostolov-Maschler Futaki invariant even if $F_b({\mathfrak z})$ does not satisfy (i) of \eqref{positivityF}. The consequence of this is then that whenever the genus of $\Sigma$ is at least one, \eqref{b} gives all the possible potentials ${\mathfrak z} +b$ for which there is an $\omega$-compatible K\"ahler metric (not necessarily given by the admissible ansatz) which is conformal to an Einstein-Maxwell metric with the conformal factor $({\mathfrak z} + b)^{-2}$.

When \eqref{b} holds but $F_b({\mathfrak z})$ does not satisfy (i) of \eqref{positivityF} we suspect that an appropriate notion of $K$-polystability (as developed in the toric case in \cite{am} - see also the discussion in Section 5.1 of \cite{am}) fails. It is tempting to conjecture that for any such case there is no $\omega$-compatible K\"ahler metric which is conformal to an Einstein-Maxwell metric with the conformal factor
$({\mathfrak z} + b)^{-2}$.

In any case, we hope that similarly to what happened in the extremal K\"ahler metrics story (see e.g. \cite{gs}), the fact that our ansatz fails for certain K\"ahler classes in the higher genus case in Section \ref{admisEMsoln} will provide examples that will guide the further development of $K$-polystability for
conformally K\"ahler, Einstein-Maxwell metrics on K\"ahlerian complex compact manifolds in general.

\newpage

\end{document}